\documentclass[12pt,draft,leqno]{article}
\usepackage{amssymb, eucal, latexsym}

\textwidth 15.1 truecm \textheight 22 truecm \topmargin 0in \headsep
0in \oddsidemargin 0in \evensidemargin 0in

\parindent 1cm
\usepackage{amsmath,amsthm,amssymb}
\usepackage{amsfonts}

\vfuzz2pt 

\newtheorem{thm}{Theorem}[section]
\newtheorem{cor}[thm]{Corollary}

\newtheorem{prop}[thm]{Proposition}


 \newcommand{\Real}{\mathbb{R}}

 \newcommand{\spfi}{\mathbf{SpFi}}

\newcommand{\lspfi}{\mathbf{LSpFi}}
\newcommand{\comp}{\mathbf{Comp}}

\newcommand{\sets}{\mathbf{Sets}}
\newcommand{\w}{\mathbf{W}}
\newcommand{\F}{\mathcal{F}}
\newcommand{\XF}{(\mathbf{X},\mathcal{F})}
\newcommand{\YF}{\left(\mathbf{Y},\mathcal{F'}\right)}

\title{{\LARGE\bf
Monomorphisms in spaces with Lindel\"{o}f filters via some compact-open-like topologies on $C(X)$}\\
\vspace{0.35cm}
{\large\bf Vasil Gochev}\thanks{This paper was supported by the project No. 140/08.05.2014 ,,Function spaces and
dualities" of the Sofia University ,,St. Kl. Ohridski".
}\\
\vspace{0.25cm}
 {\footnotesize Dept. of Math. and
Informatics, Sofia University,  5 J. Bourchier Blvd., 1164 Sofia,
Bulgaria}
}

\author{}

\date{}

\begin{document}
\maketitle
\begin{abstract}
An object in the category $\spfi$ of spaces with filters, is a pair $\XF$, where $X$ is a compact Hausdorff space and $\F$ is a filter of dense open subsets of $X$. A morphism $f:\YF\rightarrow\XF$ is a continuous map $f:\mathbf{Y}\rightarrow \mathbf{X}$ for which $f^{-1}(F)\in\F'$ whenever $F\in\F$. In the present work we study the categorical monomorphisms in the subcategory $\lspfi$ of spaces with Lindel\"{o}f filters, meaning filters with a base of Lindel\"{o}f, or cozero sets. Of course, these monomorphisms need not be one-to-one. We extend the criterion derived by R. Ball and A. Hager in \cite{BH2}. We define and study some compact-open-like topologies on the set $C(X)$ of continuous real-valued functions defined on a topological space $X$. We use them to give a new characterization of the monomorphisms in $\lspfi$.
\end{abstract}

\footnotetext[1]{{\footnotesize
{\em Key words and phrases:} $C(X)$, \v{C}ech-Stone compactification, epimorphism, monomorphism, epi-topology,  compact-open topology, space with filter, frame, lattice-ordered group.}}

\footnotetext[2]{{\footnotesize
{\em 2010 Mathematics Subject Classification:} 54C35, 18A20, 54A05, 54A10, 54B05.}}

\footnotetext[3]{{\footnotesize {\em E-mail address:}
vgotchev@fmi.uni-sofia.bg}}


\baselineskip = \normalbaselineskip

\section{Introduction}


\bigskip

The category $\spfi$ of spaces with filters arises naturally from
considerations in ordered algebra, e.g., Boolean algebra,
lattice-ordered groups and rings, and from considerations in general
topology, e.g., the theory of the absolute and other covers,
locales, and frames (see \cite{BHM}). An object in $\spfi$ is a pair $\XF$, where $X$
is a compact Hausdorff space and $\F$ is a filter of dense open
subsets of $X$. A morphism $f:\YF\rightarrow\XF$ is a continuous map
$f:\mathbf{Y}\rightarrow \mathbf{X}$ for which $f^{-1}(F)\in\F'$
whenever $F\in\F$. The subcategory $\lspfi$ of spaces with Lindel\"{o}f filters is defined as follows: $(X,\mathcal{F})\in \lspfi$ if and only if
$(X,\mathcal{F})\in \spfi$ and $\mathcal{F}$ has a base of cozero
sets.

Main references for topology are \cite{EC}, \cite{E}, and \cite{GJ}. All
topological spaces are completely regular Hausdorff, usually
compact. $\comp$ is the category of compact (Hausdorff) spaces with
continuous maps. For a space $X$, $C(X)$ is the set (or
\emph{l}-group, vector lattice, ring, \emph{l}-ring,...) of
continuous real-valued functions on $X$. The \emph{cozero set} of
$f\in C(X)$ is $\mathrm{coz}f\equiv\{x|f(x)\neq0\}$, and
$\mathrm{coz}X\equiv \{\mathrm{coz}f|f\in C(X)\}$. Each cozero set in a compact space $X$
is an $F_{\sigma}$, hence Lindel\"{o}f.

In a general category, a monomorphism is a morphism $m$ which is
left--cance- lable: $mf=mg$ implies $f=g$. A one-to-one map is a
monomorphism in $\spfi$ since it is a monomorphism in the category
$\comp$ of compact Hausdorff spaces with continuous maps since it is
a monomorphism in $\sets$, but a monomorphism in $\spfi$ is not
necessarily one-to-one. Such examples one can find in \cite{BH2}.

This work focuses on the characterization of monomorphisms in the
category $\lspfi$ via compact-open-like topologies defined on
$C(X)$. More precisely, we derive a new necessary and sufficient condition
a morphism in $\lspfi$ to be monomorphism (see Theorem \ref{7.3}(4) below). Based on this condition,
for every object $\XF\in\spfi$ are defined
topologies $\tau^{\mathcal{F}}$ and
$\tau^{\mathcal{F}}_{\varepsilon=0}$ on $C(X)$ for which the following characterizations are derived
\begin{cor}\emph{(Corollary \ref{2.4.4} below)} $f:\XF\rightarrow\YF$ is a monomorphism in $\lspfi$ if and only if
$\widetilde{f}(C(Y))$ is dense in
$(C(X),\tau^{\mathcal{F}}_{\varepsilon=0})$.
\end{cor}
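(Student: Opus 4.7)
The plan is to deduce Corollary \ref{2.4.4} as a direct reformulation of Theorem \ref{7.3}(4). Writing $\widetilde{f}:C(Y)\to C(X)$ for the induced map $\widetilde{f}(h)=h\circ f$, the image $\widetilde{f}(C(Y))$ is precisely the collection of functions on $X$ that factor through $f$. Density of this set in $(C(X),\tau^{\mathcal{F}}_{\varepsilon=0})$ means that every basic $\tau^{\mathcal{F}}_{\varepsilon=0}$-neighborhood of every $g\in C(X)$ contains some $h\circ f$ with $h\in C(Y)$, and the whole strategy is to identify this statement with the monomorphism criterion.

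First I would read off a neighborhood base of $g$ in $(C(X),\tau^{\mathcal{F}}_{\varepsilon=0})$ from its definition. Because $\tau^{\mathcal{F}}_{\varepsilon=0}$ is constructed precisely to encode the criterion of Theorem \ref{7.3}(4), its basic opens at $g$ are of the form $\{\varphi\in C(X):\varphi|_K=g|_K\}$, with $K$ ranging over compact subsets of members $F\in\mathcal{F}$. Under this description, density of $\widetilde{f}(C(Y))$ translates word-for-word into the factorization property: for every $g\in C(X)$, every $F\in\mathcal{F}$ and every compact $K\subseteq F$, there exists $h\in C(Y)$ such that $h\circ f$ and $g$ agree on $K$. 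This is precisely clause (4) of Theorem \ref{7.3}, so invoking that theorem in both directions yields the biconditional.

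The main obstacle I anticipate is the bookkeeping around the ``$\varepsilon=0$'' decoration of the topology: one must confirm that density with respect to $\tau^{\mathcal{F}}_{\varepsilon=0}$ really encodes \emph{exact} agreement on compacta inside members of $\mathcal{F}$, rather than the uniform approximation that characterises density with respect to the coarser $\tau^{\mathcal{F}}$. Verifying this match-up calls for explicit unraveling of the definitions of both topologies, and it uses the Lindel\"{o}f / cozero base hypothesis of $\lspfi$ to reduce statements about a generic $F\in\mathcal{F}$ to statements about the compact subsets of cozero sets that exhaust it. Once this translation is settled, the remainder is a routine application of Theorem \ref{7.3}(4).
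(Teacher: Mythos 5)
Your overall strategy---unravel the definition of $\tau^{\mathcal{F}}_{\varepsilon=0}$ and match density of $\widetilde{f}(C(Y))$ against Theorem \ref{7.3}(4)---is exactly the paper's proof. However, your execution contains a genuine error in the step you call a ``word-for-word'' translation, and it is precisely the step on which the whole corollary turns. You describe the basic $\tau^{\mathcal{F}}_{\varepsilon=0}$-neighborhoods of $g$ as the individual sets $\{\varphi\in C(X):\varphi|_{K}=g|_{K}\}$ with $K$ a compact subset of a single member $F\in\mathcal{F}$. That is not the topology defined in the paper. Since $\tau^{\mathcal{F}}_{\varepsilon=0}$ is the \emph{meet} $\wedge\{\tau^{S}_{\varepsilon=0}\,|\,S\in\mathcal{F}_{\delta}\}$ in the lattice of topologies, a basic neighborhood of $g$ is a union $\bigcup_{S\in\mathcal{F}_{\delta}}U^{S}_{\varepsilon=0}(K_{S},g)$ determined by a choice of one compact $K_{S}\in\mathcal{K}(S)$ for \emph{each} $S\in\mathcal{F}_{\delta}$ (and note it is $\mathcal{F}_{\delta}$, not $\mathcal{F}$, that indexes the construction). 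The single sets $U^{S}_{\varepsilon=0}(K,g)$ are open in the finer topology $\tau^{S}_{\varepsilon=0}$ but not, in general, in the meet.

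This matters because your resulting reformulation of density---``for every $g$, every $F\in\mathcal{F}$ and every compact $K\subseteq F$ there exists $h\in C(Y)$ with $h\circ f|_{K}=g|_{K}$''---has the quantifiers in the wrong order: it demands that \emph{every} filter element work for $g$, whereas clause (4) of Theorem \ref{7.3} only asserts that \emph{some} $E\in\mathcal{F}_{\delta}$ works. Your condition is strictly stronger, so the implication ``monomorphism $\Rightarrow$ dense'' does not follow from the theorem under your reading; a compact $K$ sitting inside some $F$ but not inside the good set $E$ need not admit any approximant. The correct matching goes through the union-type neighborhoods: density says that for every assignment $S\mapsto K_{S}\in\mathcal{K}(S)$ there is \emph{some} $S$ and some $a\in\widetilde{f}(C(Y))$ with $a|_{K_{S}}=g|_{K_{S}}$, and negating both sides (choosing, for each $S$, a bad compact $K_{S}$ when no single $E$ works) shows this is equivalent to the $\exists E\,\forall K$ form of Theorem \ref{7.3}(4). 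Your final paragraph's appeal to the Lindel\"{o}f/cozero base does not repair this quantifier mismatch. Once you state the neighborhood base correctly, the rest of your argument goes through and coincides with the paper's.
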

\begin{cor} \emph{(Corollary \ref{2.4.5} below)} $f:\XF\rightarrow\YF$ is a monomorphism in $\lspfi$ if and only if
$\widetilde{f}(C(Y))$ is dense in $(C(X),\tau^{\mathcal{F}})$.
\end{cor}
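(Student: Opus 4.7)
The plan is to deduce Corollary 2.4.5 from Corollary 2.4.4 by a direct comparison of the two topologies $\tau^{\mathcal{F}}$ and $\tau^{\mathcal{F}}_{\varepsilon=0}$ on $C(X)$. The naming convention suggests that $\tau^{\mathcal{F}}_{\varepsilon=0}$ is the ``exact-agreement'' specialization of the $\varepsilon$-parametrized family of basic neighborhoods defining $\tau^{\mathcal{F}}$, so I expect the inclusion $\tau^{\mathcal{F}}\subseteq\tau^{\mathcal{F}}_{\varepsilon=0}$: every basic neighborhood demanding equality on some $F\in\mathcal{F}$ sits inside the basic neighborhood demanding $\varepsilon$-closeness on $F$, for every $\varepsilon>0$. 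From this containment, one implication is immediate: if $f$ is a monomorphism in $\lspfi$, Corollary 2.4.4 gives density of $\widetilde{f}(C(Y))$ in the finer topology $(C(X),\tau^{\mathcal{F}}_{\varepsilon=0})$, and density in a finer topology automatically gives density in the coarser $\tau^{\mathcal{F}}$.

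For the converse, I see two natural routes. The cleanest is to bypass $\tau^{\mathcal{F}}_{\varepsilon=0}$ altogether and go through Theorem~\ref{7.3}(4): show that density of $\widetilde{f}(C(Y))$ in $(C(X),\tau^{\mathcal{F}})$ directly verifies the monomorphism criterion of 7.3(4). This is plausible because the basic $\tau^{\mathcal{F}}$-neighborhoods are, by design, precisely the ``compact-open-like'' sets demanded by that criterion (finite intersections of conditions of the form $|g-h|<\varepsilon$ on compact or cozero pieces of members of $\mathcal{F}$). The alternative, more direct route is to take $h\in C(X)$ and a prescribed exact-agreement neighborhood, use $\tau^{\mathcal{F}}$-density to produce for each $\varepsilon>0$ some $g_\varepsilon\in C(Y)$ with $\widetilde{f}(g_\varepsilon)$ uniformly $\varepsilon$-close to $h$ on the relevant cozero piece of $F$, and then perturb $g_\varepsilon$ by a cut-off or truncation to enforce exact agreement; the Lindel\"of hypothesis on $\mathcal{F}$ is crucial here, since cozero sets are $F_\sigma$ and so admit countable exhaustions on which uniform approximation behaves well.

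The main obstacle I anticipate is matching the precise shape of the basic neighborhoods of $\tau^{\mathcal{F}}$ with the exact clause in Theorem~\ref{7.3}(4): once the identification is clean, ``$\widetilde{f}(C(Y))$ is $\tau^{\mathcal{F}}$-dense'' reads almost tautologically as ``for every $h$, every $F\in\mathcal{F}$, and every $\varepsilon>0$, there is $g\in C(Y)$ with $|\widetilde{f}(g)-h|<\varepsilon$ on the requisite compact/cozero piece of $F$,'' which is what the monomorphism criterion is expected to demand. The delicate step is verifying that the finite-intersection structure in the subbase of $\tau^{\mathcal{F}}$ is faithfully captured in 7.3(4) and that nothing is lost in passing from the $\varepsilon>0$ approximations guaranteed by $\tau^{\mathcal{F}}$-density back to the $\varepsilon=0$ formulation of Corollary~\ref{2.4.4}; the Lindel\"of/cozero structure of $\mathcal{F}$ should absorb this passage without pathology.
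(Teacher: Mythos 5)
Your forward implication is fine: $\tau^{\mathcal{F}}$ is indeed coarser than $\tau^{\mathcal{F}}_{\varepsilon=0}$ (each $U^{S}_{\varepsilon=0}(K_S,b)$ sits inside $U^{S}(K_S,\varepsilon_S,b)$), so density in the $\varepsilon=0$ topology from Corollary \ref{2.4.4} gives density in $\tau^{\mathcal{F}}$; this matches the paper, which gets it directly from Theorem \ref{7.3}(4).

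The converse, however, is where all the content lies, and your proposal does not contain an argument for it. Your first route rests on a misreading of Theorem \ref{7.3}(4): that criterion demands \emph{exact} agreement $a|K=b|K$ on every compact $K$ contained in a single $E\in\mathcal{F}_{\delta}$, not $\varepsilon$-closeness, so $\tau^{\mathcal{F}}$-density does not ``read almost tautologically'' as the criterion. Worse, the quantifiers do not line up: a basic $\tau^{\mathcal{F}}$-neighborhood is a \emph{union} $\bigcup_{S}U^{S}(K_S,\varepsilon_S,b)$ over all $S\in\mathcal{F}_{\delta}$, so density only produces, for each choice of data, some $a\in A$ that is $\varepsilon_S$-close to $b$ on $K_S$ for \emph{some} $S$ depending on that choice; extracting the single uniform $E$ of \ref{7.3}(4) is far from formal. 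Your second route (``perturb $g_\varepsilon$ by a cut-off or truncation to enforce exact agreement'') does not work either: the approximants must stay inside $A=\widetilde{f}(C(Y))$, i.e.\ remain of the form $g\circ f$, and there is no evident truncation of $g_\varepsilon\in C(Y)$ that forces $g\circ f$ to coincide with $b$ exactly on a prescribed compact set. Passing from $\varepsilon>0$ to $\varepsilon=0$ is precisely the theorem, not a formality the ``Lindel\"of/cozero structure should absorb.''

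The paper's proof of the converse is correspondingly heavy: it forms the inverse limit $X^{\ast}=\underset{\leftarrow}{\lim}\{\beta F\,|\,F\in\mathcal{F}\}$ with its irreducible projection $\pi$ onto $X$, reduces to showing $f\circ\pi$ monic (Proposition \ref{A}), embeds $(C(X),\tau^{\mathcal{F}})$ topologically into $(C(X^{\ast}),\tau^{\mathcal{F}^{\ast}})$ (Proposition \ref{AA}), and then invokes the Ball--Hager identification of $\w$-epimorphisms with density in the epi-topology together with the Yosida functor $SY$, which carries $\w$-epics to $\lspfi$-monics. Some substitute for this machinery (or a genuinely new argument converting approximate agreement into the exact-agreement criterion) is needed; your proposal supplies neither.
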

Here, the map $\widetilde{f}:C(Y)\rightarrow C(X)$ is defined by
$\widetilde{f}(g)= g\circ f$ for every $g\in C(Y)$.

We have shown in \cite{G1} that these topologies are always $T_{1}$, countably tight, and homogeneous. We established necessary and sufficient conditions these topologies to be Hausdorff or group topologies. We don't include these results here because of the significant extra complications and length.
There are different characterizations of the monomorphisms in $\spfi$
and $\lspfi$ which are briefly summarized in \cite{BH2}, but for
more details one can see also \cite{BHM}, \cite{BHMa}, \cite{H}, \cite{H1}, \cite{JM}, and \cite{MM}.

\section{Monomorphisms in $\lspfi$}  First
we will recall some of the results established by R. Ball
and A. Hager in \cite{BH2}.
\begin{thm}\label{7.1}
\emph{(\cite{BH2})} Let $\YF \overset{f}{\leftarrow}\XF \in\lspfi$.
Then the following are equivalent.

\emph{(1)} $f$ is a monomorphism in $\lspfi$.

\emph{(2)} If $x_{1}\neq x_{2} \in X $ then there are $S\in
\mathcal{F}_{\delta}$ and neighborhoods $U_{i}$ of $x_{i}$ for which
$$f[U_{1}\cap S]\cap f[U_{2}\cap S]=\emptyset .$$

\emph{(3)} If $K_{1}$ and $K_{2}$ are disjoint compact sets in $X$
then there are $S\in \mathcal{F}_{\delta}$ and neighborhoods $U_{i}$
of $K_{i}$ for which $$f[U_{1}\cap S]\cap f[U_{2}\cap S]=\emptyset
.$$

\emph{(4)} For each $b\in C(X)$ there is $S\in \mathcal{F}_{\delta}$
for which $x_{1}, x_{2} \in S $ and $f(x_{1})=f(x_{2})$ imply
$b(x_{1})=b(x_{2})$.
\end{thm}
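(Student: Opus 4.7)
The plan is to establish the equivalences in the cycle $(3)\Rightarrow(2)\Rightarrow(1)\Rightarrow(4)\Rightarrow(2)$, complemented by $(2)\Rightarrow(3)$ to close the loop on the compact-set side. Three of these implications reduce to routine verifications. The implication $(3)\Rightarrow(2)$ is immediate by taking $K_i=\{x_i\}$. For $(4)\Rightarrow(2)$, given distinct $x_1,x_2\in X$, complete regularity yields $b\in C(X)$ with $b(x_1)\neq b(x_2)$; applying (4) to this $b$ supplies $S\in\mathcal{F}_\delta$, and one takes for $U_i$ the $b$-preimages of small disjoint open intervals around $b(x_i)$, so that any common image point in $f[U_1\cap S]\cap f[U_2\cap S]$ would force $b$ to coincide on its $f$-fiber by (4), contradicting the disjointness of the chosen intervals. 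For $(2)\Rightarrow(3)$, a two-stage tube-lemma argument uses compactness of $K_1$ and $K_2$ to extract finitely many separations per point and intersect the corresponding sets (which is legitimate because $\mathcal{F}_\delta$ is closed under finite intersections and a finite intersection of open neighborhoods of a single point is still a neighborhood, while a finite union of neighborhoods of points of $K_2$ covers $K_2$).

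The implication $(2)\Rightarrow(1)$ is the standard Baire-category argument. Suppose $g,h:(Z,\mathcal{G})\to(X,\mathcal{F})$ are two $\lspfi$-morphisms with $f\circ g=f\circ h$ but $g(z_0)\neq h(z_0)$ for some $z_0\in Z$. Apply (2) with $x_i=g(z_0),h(z_0)$ to obtain $S\in\mathcal{F}_\delta$ and open neighborhoods $U_i$ with $f[U_1\cap S]\cap f[U_2\cap S]=\emptyset$. Since $g$ and $h$ are morphisms, writing $S=\bigcap_n F_n$ with $F_n\in\mathcal{F}$ gives $g^{-1}(S)\cap h^{-1}(S)=\bigcap_n(g^{-1}(F_n)\cap h^{-1}(F_n))\in\mathcal{G}_\delta$, a countable intersection of dense open sets in the compact Hausdorff space $Z$; hence it is dense by Baire's theorem and meets the nonempty open set $g^{-1}(U_1)\cap h^{-1}(U_2)$ at some point $z$. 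But then $f(g(z))=f(h(z))\in f[U_1\cap S]\cap f[U_2\cap S]=\emptyset$, a contradiction.

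The main obstacle is $(1)\Rightarrow(4)$, which I would attack contrapositively. Assume (4) fails for some fixed $b\in C(X)$, i.e.\ for every $S\in\mathcal{F}_\delta$ there are points $x_1^S,x_2^S\in S$ with $f(x_1^S)=f(x_2^S)$ yet $b(x_1^S)\neq b(x_2^S)$. The goal is to produce a third $\lspfi$-object $(Z,\mathcal{G})$ together with two distinct morphisms $g,h:(Z,\mathcal{G})\to(X,\mathcal{F})$ such that $f\circ g=f\circ h$, thereby violating (1). The natural candidate for $Z$ is the compact pullback $E=\{(y_1,y_2)\in X\times X:f(y_1)=f(y_2)\}$, a closed subspace of $X\times X$, with coordinate projections $\pi_1,\pi_2$ (which are automatically distinct on the witnesses $(x_1^S,x_2^S)$); the candidate filter $\mathcal{G}$ is generated by the sets $(F\times F)\cap E$ as $F$ runs over a cozero base of $\mathcal{F}$, and these are cozero in $E$. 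The substantive technical difficulty, and the place where the negation of (4) must really be used, is to arrange that these generating sets are \emph{dense} in an appropriate compact subobject $Z\subseteq E$ on which $\pi_1\neq\pi_2$: the abundance of off-diagonal witnesses $(x_1^S,x_2^S)$ supplies the density at the level of filter-intersections, but converting this into genuine density of the individual generators typically requires restricting $Z$ to the closure of the collected witnesses, or invoking a Zorn-style maximality argument to pick a "non-diagonal" closed subobject whose induced projections remain distinct while all the $(F\times F)\cap Z$ stay dense open.
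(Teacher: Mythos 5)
The paper does not actually prove this theorem: it is quoted verbatim from \cite{BH2}, so there is no in-paper argument to compare yours against, and your attempt must be judged on its own. The implications $(3)\Rightarrow(2)$, $(4)\Rightarrow(2)$, $(2)\Rightarrow(3)$ and $(2)\Rightarrow(1)$ are correct as you describe them: the singleton specialization, the preimages of disjoint intervals around $b(x_1)$ and $b(x_2)$, the two-stage compactness refinement (your observation that the separating data can be intersected/united finitely is the right lemma to isolate), and the Baire-category argument in the compact test object $Z$ all go through. Your cycle $(2)\Rightarrow(1)\Rightarrow(4)\Rightarrow(2)$ together with $(2)\Leftrightarrow(3)$ does yield all four equivalences.

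The genuine gap is $(1)\Rightarrow(4)$, and you have in effect conceded it yourself: the entire content of that direction is the construction of a subobject $(Z,\mathcal G)$ of the pullback $E=\{(y_1,y_2):f(y_1)=f(y_2)\}$ on which every $(F\times F)\cap Z$, $F\in\mathcal F$, is \emph{dense} while $\pi_1\neq\pi_2$, and your text only names two possible strategies without carrying either out. Neither works as stated. Taking $Z$ to be the closure of the collected witnesses $(x_1^S,x_2^S)$ does not make $(F\times F)\cap Z$ dense in $Z$: witnesses coming from $S\not\subseteq F$ need not lie in $F\times F$, and limit points of witnesses can escape every member of the trace filter. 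A Zorn argument on closed $Z\subseteq E$ satisfying ``$Z\cap(S\times S)$ meets the off-diagonal part for all $S\in\mathcal F_\delta$'' runs into the problem that this property is not obviously preserved under intersections of chains, because $Z_\alpha\cap(S\times S)$ is not closed. Two further points you would need and do not mention: (i) a uniformization step replacing ``$b(x_1^S)\neq b(x_2^S)$'' by ``$|b(x_1^S)-b(x_2^S)|\geq\varepsilon$ for a single $\varepsilon>0$ valid for all $S$'' (available because $\mathcal F_\delta$ is closed under countable intersections), without which the closure of the witness set may collapse into the diagonal and give $\pi_1=\pi_2$ on $Z$; and (ii) verification that the resulting $(Z,\mathcal G)$ lies in $\lspfi$ --- the cozero-base hypothesis on $\mathcal F$ must be used here, and its absence from your sketch is a warning sign, since the theorem is specific to Lindel\"of filters. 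Until a concrete $Z$ with dense cozero traces is exhibited, the implication $(1)\Rightarrow(4)$, hence the theorem, is not proved.
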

\begin{cor}\emph{(\cite{BH2})} If there is $S\in
\mathcal{F}$ or $S\in \mathcal{F}_{\delta}$ such that $f$ is
one-to-one on $S$ then $f$ is monic.
\end{cor}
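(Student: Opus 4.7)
The cleanest route is to verify criterion (4) of Theorem \ref{7.1}, which is the only one that does not require us to produce neighborhoods or to argue about two distinct points separately. First observe that every $S\in\mathcal{F}$ is itself in $\mathcal{F}_{\delta}$ (being a trivial countable intersection), so both hypotheses of the corollary reduce to the single case ``there exists $S\in\mathcal{F}_{\delta}$ on which $f$ is one-to-one.''

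Fix such an $S$. The plan is simply to use the \emph{same} $S$ as the witness required by (4), regardless of which $b\in C(X)$ is given. Indeed, for an arbitrary $b\in C(X)$, suppose $x_{1},x_{2}\in S$ satisfy $f(x_{1})=f(x_{2})$. Since $f$ is injective on $S$, we conclude $x_{1}=x_{2}$, whence trivially $b(x_{1})=b(x_{2})$. Thus (4) holds, and Theorem \ref{7.1} gives that $f$ is a monomorphism in $\lspfi$.

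There is essentially no obstacle: the strength of criterion (4) over (2) and (3) is precisely that the witnessing $S$ may depend on $b$, and here we are in the degenerate situation where a single $S$ works uniformly in $b$. Were one to attempt (2) instead, one would have to separately handle pairs $x_{1}\neq x_{2}$ not both lying in $S$, which would require exploiting the density of $S$ and some extra continuity argument; this complication is what makes (4) the natural formulation to invoke.
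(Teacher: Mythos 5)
Your proof is correct: taking the single set $S\in\mathcal{F}_{\delta}$ on which $f$ is injective as the uniform witness for every $b\in C(X)$ does verify condition (4) of Theorem \ref{7.1}, and the observation that $\mathcal{F}\subseteq\mathcal{F}_{\delta}$ collapses the two hypotheses is fine. The paper, however, goes through condition (2) of Theorem \ref{7.1} instead (its proof is a one-liner asserting that (2) holds), so your route is genuinely different: you use the functional-separation criterion, the paper uses the point-separation criterion. Both work, and yours is arguably the more transparent of the two since the verification is purely set-theoretic.

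One point in your closing paragraph is off, though. You claim that verifying (2) directly ``would require exploiting the density of $S$ and some extra continuity argument.'' It would not. Given $x_{1}\neq x_{2}$ in the compact Hausdorff space $X$, choose disjoint open neighborhoods $U_{1},U_{2}$; then $U_{1}\cap S$ and $U_{2}\cap S$ are disjoint subsets of $S$, and injectivity of $f$ on $S$ immediately forces $f[U_{1}\cap S]\cap f[U_{2}\cap S]=\emptyset$, since a common image point would come from a single point of $S$ lying in both $U_{1}$ and $U_{2}$. Note that (2) only asks about images of points of $S$ inside the $U_{i}$, so the pairs $x_{1},x_{2}$ ``not both lying in $S$'' cause no difficulty at all. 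So the paper's choice of (2) is just as degenerate a verification as your choice of (4); the two approaches are of essentially equal weight here.
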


\begin{proof} {
If there is $S\in
\mathcal{F}$ or $S\in \mathcal{F}_{\delta}$ such that $f$ is
one-to-one on $S$ then condition (2) of the previous theorem holds,
so $f$ is a monomorphism.}
\end{proof}

In the following theorem, the equivalence of (1), (2), and (3) is from
\cite{BH2}. We have included condition (4).

\begin{thm} \label{7.3}Let $\YF
\overset{f}{\leftarrow}\XF \in\lspfi$, and let
$A\equiv\widetilde{f}(C(Y))$. Then the following are equivalent.

\emph{(1)} $f$ is a monomorphism in $\lspfi$.

\emph{(2)} For every $b\in C(X)$ there is an
$E\in\mathcal{F}_{\delta}$ such that for every $x_{1},x_{2}\in E$
there is $a\in A$ with $ax_{1}=bx_{1}$ and $ax_{2}=bx_{2}$.

\emph{(3)} For every $b\in C(X)$ there is an $E\in
\mathcal{F}_{\delta}$ such that for every finite $F\subseteq E$
there is $a\in A$ with $a|F=b|F$.

\emph{(4)} For every $b\in C(X)$ there is an $E\in
\mathcal{F}_{\delta}$ such that for every compact $K\subseteq E$
there is $a\in A$ with $a|K=b|K$.

\end{thm}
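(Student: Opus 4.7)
The plan is to reduce everything to the known equivalences (1) $\Leftrightarrow$ (2) $\Leftrightarrow$ (3) of \cite{BH2} and to Theorem~\ref{7.1}(4). The implications (4) $\Rightarrow$ (3) $\Rightarrow$ (2) are immediate since every two-element set is finite and every finite set is compact. Combined with the Ball--Hager chain, it therefore suffices to prove (1) $\Rightarrow$ (4).

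So fix a monomorphism $f$ and $b\in C(X)$. I would invoke Theorem~\ref{7.1}(4) to produce $E\in\mathcal{F}_\delta$ with the property that $x_1,x_2\in E$ and $f(x_1)=f(x_2)$ force $b(x_1)=b(x_2)$. My claim is that this same $E$ works for (4). Let $K\subseteq E$ be compact. Then $f(K)\subseteq Y$ is compact, hence closed in the Hausdorff space $Y$, and the restriction $f|_K\colon K\to f(K)$ is a continuous surjection from a compact space onto a Hausdorff space, hence a closed map and in particular a quotient map.

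Using the separating property of $E$ restricted to $K$, I would define $\tilde b\colon f(K)\to\Real$ unambiguously by $\tilde b(f(x))=b(x)$ for $x\in K$; continuity of $\tilde b$ then follows from the quotient map property of $f|_K$ (since $\tilde b\circ f|_K=b|_K$ is continuous). Because $Y$ is compact Hausdorff (hence normal) and $f(K)$ is closed in $Y$, Tietze's extension theorem delivers $g\in C(Y)$ with $g|_{f(K)}=\tilde b$. Setting $a\equiv\widetilde{f}(g)=g\circ f\in A$, we obtain $a|K=b|K$, as required.

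The argument is essentially mechanical once Theorem~\ref{7.1}(4) is in hand; the one point requiring care is the continuity of $\tilde b$, which is why the compactness of $K$ (rather than of $E$, which is only an $F_\sigma$) is precisely what makes the quotient/Tietze step go through. In that sense, condition (4) is the strongest formulation one can reach by this style of proof without additional hypotheses on $E$.
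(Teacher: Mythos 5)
Your proof is correct and follows essentially the same route as the paper: reduce to showing that the separating condition of Theorem~\ref{7.1}(4) yields condition (4), then use well-definedness of the induced function on $f(K)$, the fact that $f|_K\colon K\to f(K)$ is a closed (hence quotient) map, and Tietze extension in the normal space $Y$. Your version is a slightly streamlined presentation of the same quotient-map argument the paper carries out via the quotient space $X_{/E(f)}$.
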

\begin{proof} The equivalence of the first three conditions in the
theorem is proven in \cite{BH2}. The implication $(4)\Rightarrow
(3)$ is clear. We will show that the condition $(4)$ of Theorem
\ref{7.1} implies (4) of this theorem. Let us fix $b$ and $S\in
\mathcal{F}_{\delta}$ as in Theorem \ref{7.1}$(4)$. Then for every
compact $\emptyset\neq K\subseteq S$, $f(K)$ is a compact subset of
$Y$. Let us define on $f(K)$ a function $g:f(K)\rightarrow\Real$ as
follows $g(y)=b(x)$, where $x\in K$ and $y=f(x)$. The function $g$
is well defined, because on $f^{-1}(y)\cap K$ the function $b$ is
equal to $b(x)$. If $g$ is continuous, then we can extend it to
$\widetilde{g}:Y\rightarrow\Real$ by the normality of $Y$. Then $a=
\widetilde{g}\circ f$ has the  required property $a|K=b|K$. Thus we
need to show that $g$ is continuous function.

Let us consider the following general situation: $f:X\rightarrow Y$
is a continuous onto mapping where $X$ and $Y$ are compact Hausdorff
spaces. Let $X_{/E(f)}$ be the quotient space under the equivalence
relation $E(f)$ induced by $f$. It is clear that $f$ is closed
mapping. Now from Corollary 2.4.8 from \cite{E} it follows that $f$
is a quotient mapping and by Proposition 2.4.3 from \cite{E} it
follows that the mapping $\overline{f}:X_{/E(f)}\rightarrow Y$ is a
homeomorphism. We have the following situation: $X,Y,Z$ -
topological spaces, $X,Y$ - compact Hausdorff, $f:X\rightarrow Y$ is
a continuous onto mapping, $g:X\rightarrow Z$ is continuous and has
the property $g_{/f^{-1}(y)}= constant=z(y)\in Z$ for every $y\in
Y$. The map $\widetilde{g}: Y\rightarrow Z$ is defined by
$\widetilde{g}(y)=z(y)$, $\overline{f}:X_{/E(f)}\rightarrow Y$ is
defined by $\overline{f}(f^{-1}(y))=y$, and $q:X\rightarrow
X_{/E(f)}$ is the natural quotient mapping. From the previous notes
it follows that $\overline{f}:X_{/E(f)}\rightarrow Y$ is a
homeomorphism, so $\widetilde{g}$ is continuous if and only if
$\widetilde{g}\circ \overline{f}$ is continuous if and only if (by proposition
2.4.2 from \cite{E}) $\widetilde{g}\circ \overline{f}\circ q =
\widetilde{g}\circ f = g $ is continuous. This implies that the
function $g:f(K)\rightarrow\Real$ defined above is continuous.
\end{proof}
Condition (4) from the previous theorem suggests on $C(X)$ a topology $\tau$ such that $A$ is a dense subset in $(C(X),\tau)$. In the following section we define two topologies on $C(X)$ which have that property and characterize the monomorphisms in $\lspfi$.

\section{Some compact-open-like topologies on $C(X)$}
Let $X$ be a topological space and $S$ be a dense subset of $X$. Let
$\tau_{S}$ be the relativization of the compact-open topology
$\tau_{co}$ on $C(S)$ to $C(X)$. (More details on the definition end properties of the compact-open topology one can find in \cite{E} and \cite{MN}.) Here we consider $C(X)$ as a
subspace of $(C(S),\tau_{co})$ via the natural embedding $C(X)\ni
f\mapsto f|S \in C(S)$. Since $(C(S),\tau_{co},+,\vee,\wedge)$ is a
Hausdorff topological \emph{l}-group with $+,\vee,\wedge$ defined pointwise, so is $(C(X),\tau_{S},+,\vee,\wedge)$ and the neighborhoods of the constant function $0$ suffice to describe it.

If $\mathcal{S}$ is a family of dense subsets of a topological space
$X$, then for each $S\in \mathcal{S}$ we have the corresponding
$\tau_{s}$ as above, and we consider the $\mathcal{S}$-topology
$\tau^{\mathcal{S}}\equiv\wedge\{\tau_{s}|S\in \mathcal{S}\}$ on $C(X)$. Here $\wedge$ is taken in the
lattice of topologies on $C(X)$. It is now unclear what are the properties of the structures $(C(X),\tau^{\mathcal{S}})$? Topology $\tau^{\mathcal{S}}$ is always $T_{1}$, but not necessarily Hausdorff or group topology. (See \cite{HR} for the definition of topological group and more.)

The above is surely (or may be not) too general to say much. We
limit the generality to the following considerations: Let $(X,\mathcal{F})\in \lspfi$ and $\mathcal{F}_{\delta}\equiv\{\cap F_{n}|F_{1},F_{2},F_{3}\ldots\in\mathcal{F}\}$. For $S\in
\mathcal{F}_{\delta}$, let $\tau_{S}$ be as above and $\mathcal{K}(S)$ is the family of all compact subsets of $S$. We define $\tau^{\mathcal{F}}\equiv\wedge\{\tau_{s}|S\in \mathcal{F}_{\delta}\}$. A local base at a point $f\in
C(X)$ is the collection of all sets of the form
$\underset{S\in\mathcal{F}_{\delta}}{\bigcup}U^{s}(K_{s},\varepsilon_{s},f)$,
where $K_{s}\in\mathcal{K}(S)$ and $\varepsilon_{s}\in(0,1]$ for
every $S\in\mathcal{F}_{\delta}$.

The following proposition follows immediately from definitions.
\begin{prop}\label{SF2} Let $f:\XF\rightarrow\YF$ be a morphism in $\lspfi$,
$\tau^{\mathcal{F}}$ and $\tau^{\mathcal{F'}}$ be the corresponding
topologies on $C(X)$ and $C(Y)$. Let $\widetilde{f}:C(Y)\rightarrow
C(X)$ be the induced map defined by $\widetilde{f}(g)= g\circ f$ for
every $g\in C(Y)$. Then
$\widetilde{f}:(C(Y),\tau^{\mathcal{F'}})\rightarrow
(C(X),\tau^{\mathcal{F}})$ is continuous.
\end{prop}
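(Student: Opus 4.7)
The plan is to verify continuity of $\widetilde{f}$ directly from the explicit local-base description just introduced. Fix $g \in C(Y)$ and an arbitrary basic neighborhood
\[
V \;=\; \bigcup_{S \in \mathcal{F}_{\delta}} U^{S}(K_{S},\varepsilon_{S},g\circ f)
\]
of $\widetilde{f}(g)=g\circ f$ in $(C(X),\tau^{\mathcal{F}})$; the task is to exhibit a basic $\tau^{\mathcal{F'}}$-neighborhood $W$ of $g$ with $\widetilde{f}(W)\subseteq V$.

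The morphism property in $\lspfi$ supplies the needed bridge between $\mathcal{F}_{\delta}$ and $\mathcal{F'}_{\delta}$. If $T=\bigcap_{n}F'_{n}$ with each $F'_{n}\in\mathcal{F'}$, then $f^{-1}(T)=\bigcap_{n}f^{-1}(F'_{n})\in\mathcal{F}_{\delta}$; moreover, for any compact $K\subseteq f^{-1}(T)$ the image $f(K)$ is a compact subset of $T$. Using this pullback indexing, define, for each $T\in\mathcal{F'}_{\delta}$,
\[
L_{T} \;:=\; f\bigl(K_{f^{-1}(T)}\bigr),\qquad \delta_{T}\;:=\;\varepsilon_{f^{-1}(T)},
\]
and set
\[
W \;:=\; \bigcup_{T \in \mathcal{F'}_{\delta}} U^{T}(L_{T},\delta_{T},g).
\]
Each $L_{T}$ is a compact subset of $T$ and each $\delta_{T}\in(0,1]$, so $W$ is a legitimate basic $\tau^{\mathcal{F'}}$-neighborhood of $g$.

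To verify $\widetilde{f}(W)\subseteq V$: for $h\in W$, choose $T$ with $h\in U^{T}(L_{T},\delta_{T},g)$, i.e., $|h(y)-g(y)|<\delta_{T}$ for all $y\in L_{T}$. Put $S:=f^{-1}(T)$. For every $x\in K_{S}$ one has $f(x)\in f(K_{S})=L_{T}$, hence
\[
|(h\circ f)(x)-(g\circ f)(x)|\;<\;\delta_{T}\;=\;\varepsilon_{S}.
\]
Thus $\widetilde{f}(h)=h\circ f\in U^{S}(K_{S},\varepsilon_{S},g\circ f)\subseteq V$. The main (mild) obstacle is the indexing bookkeeping — one must match the $\mathcal{F'}_{\delta}$-index $T$ of the neighborhood of $g$ with the $\mathcal{F}_{\delta}$-index $f^{-1}(T)$ of the corresponding piece of $V$. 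No Hausdorffness, translation invariance, or group-topology structure is used; only the definition of an $\lspfi$-morphism together with the elementary compact-open estimate $\sup_{K_{S}}|(h-g)\circ f| \le \sup_{f(K_{S})}|h-g|$.
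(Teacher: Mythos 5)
Your argument is correct: the pullback $T\mapsto f^{-1}(T)$ lands in $\mathcal{F}_{\delta}$ because $f$ is an $\lspfi$-morphism, $f(K_{f^{-1}(T)})$ is compact in $T$, and the estimate $\sup_{K_{S}}|(h-g)\circ f|\le\sup_{f(K_{S})}|h-g|$ does the rest. The paper offers no proof (it states the proposition ``follows immediately from definitions''), and your write-up is exactly the direct verification from the stated local bases that the author intends.
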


Now we are going to define a topology on the set of continuous
functions which is suggested by condition (4) from Theorem \ref{7.3}. Let $X$ be a Tychonoff space, and let $\mathcal{K}(X)$ be the family of all compact subsets of $X$. We define the \emph{compact-zero}
topology $\tau_{\varepsilon=0}$ on $C(X)$ as follows: Basic neighborhoods for $f\in C(X)$ are of the form
$U(K,f)=\{g\in C(X)|f/K=g/K\}$, where $K\in\mathcal{K}(X)$. It is straightforward to check that $\{U(K,f)|f\in C(X),K\in\mathcal{K}(X)\}$ form a base for a topology on $C(X)$. This topology coincides with the discrete topology if and only it $X$ is compact. It is
clear that $U(K,f)=\{g\in C(X)|x\in K \Rightarrow |f(x)-g(x)|=0\}$,
i.e. we allow $\varepsilon=0$ in the usual compact-open topology.
Obviously this topology is finer than the compact-open topology on
$C(X)$. One can easily check that $+,-,\vee,\wedge$ are continuous, thus $(C(X),\tau_{\varepsilon=0},+,\vee,\wedge)$ is a Hausdorff topological \emph{l}-group. Let $(X,\mathcal{F})\in \lspfi$ and for a fixed $S\in
\mathcal{F}_{\delta}$ let $\tau^{S}_{\varepsilon=0}$ be the relativization of the compact-zero topology on $C(S)$ to $C(X)$. We define the topology
$\tau^{\mathcal{F}}_{\varepsilon=0}\equiv\wedge\{\tau^{S}_{\varepsilon=0}|S\in \mathcal{F}_{\delta}\}$. A local base at a point $f\in C(X)$ is the collection of all sets of the form $\underset{S\in\mathcal{F}_{\delta}}{\bigcup}U^{S}_{\varepsilon=0}(K_{s},f)$,
where $K_{s}\in\mathcal{K}(S)$ for every $S\in\mathcal{F}_{\delta}$.

The following proposition follows immediately from the definitions
\begin{prop}\label{SF22} Let $f:\XF\rightarrow\YF$ be a morphism in $\lspfi$,
$\tau^{\mathcal{F}}_{\varepsilon=0}$ and
$\tau^{\mathcal{F'}}_{\varepsilon=0}$ be the corresponding
topologies on $C(X)$ and $C(Y)$. Let $\widetilde{f}:C(Y)\rightarrow
C(X)$ be the induced map defined by $\widetilde{f}(g)= g\circ f$ for
every $g\in C(Y)$. Then
$\widetilde{f}:(C(Y),\tau^{\mathcal{F'}}_{\varepsilon=0})\rightarrow
(C(X),\tau^{\mathcal{F}}_{\varepsilon=0})$ is continuous.
\end{prop}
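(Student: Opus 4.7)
The plan is to unpack the bases of both topologies and exhibit an explicit preimage for each basic neighborhood. Fix $g \in C(Y)$ and let
\[
V \;=\; \bigcup_{S\in\mathcal{F}_{\delta}} U^{S}_{\varepsilon=0}(K_{S},\,g\circ f)
\]
be an arbitrary basic neighborhood of $\widetilde{f}(g)=g\circ f$ in $(C(X),\tau^{\mathcal{F}}_{\varepsilon=0})$, with $K_{S}\in\mathcal{K}(S)$ for every $S\in\mathcal{F}_{\delta}$. We want to produce a basic neighborhood $W$ of $g$ in $(C(Y),\tau^{\mathcal{F'}}_{\varepsilon=0})$ with $\widetilde{f}(W)\subseteq V$.

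The construction of $W$ uses the two pieces of information we have: the morphism condition and continuity of $f$. First, for each $T\in\mathcal{F}'_{\delta}$, writing $T=\bigcap T_{n}$ with $T_{n}\in\mathcal{F}'$, we have $f^{-1}(T)=\bigcap f^{-1}(T_{n})\in\mathcal{F}_{\delta}$ since $f$ is a morphism in $\lspfi$. Second, $K_{f^{-1}(T)}$ is a compact subset of $f^{-1}(T)$, so $L_{T}:=f(K_{f^{-1}(T)})$ is a compact subset of $T$, i.e.\ $L_{T}\in\mathcal{K}(T)$. Define
\[
W \;=\; \bigcup_{T\in\mathcal{F}'_{\delta}} U^{T}_{\varepsilon=0}(L_{T},\,g),
\]
which is a basic neighborhood of $g$ in $(C(Y),\tau^{\mathcal{F'}}_{\varepsilon=0})$.

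Finally we verify inclusion. If $h\in W$, pick $T\in\mathcal{F}'_{\delta}$ with $h|L_{T}=g|L_{T}$, and set $S=f^{-1}(T)\in\mathcal{F}_{\delta}$. For every $x\in K_{S}$ we have $f(x)\in L_{T}$, hence $(h\circ f)(x)=(g\circ f)(x)$. Thus $\widetilde{f}(h)=h\circ f\in U^{S}_{\varepsilon=0}(K_{S},g\circ f)\subseteq V$, and continuity of $\widetilde{f}$ follows. There is no real obstacle here; the only point that needs attention is making sure the index sets for the unions defining $V$ and $W$ match up correctly through $f^{-1}$, which is exactly what the morphism condition in $\lspfi$ provides.
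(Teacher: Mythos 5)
Your proof is correct and is precisely the routine verification that the paper omits (the paper states only that the proposition ``follows immediately from the definitions''). The two observations you isolate --- that $f^{-1}(T)\in\mathcal{F}_{\delta}$ for every $T\in\mathcal{F}'_{\delta}$ by the morphism condition, and that $f(K)\in\mathcal{K}(T)$ for compact $K\subseteq f^{-1}(T)$ --- are exactly what is needed, and the matching of the index sets through $f^{-1}$ is handled correctly.
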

Propositions \ref{SF2} and \ref{SF22} show that the associations
$\XF\rightarrow(C(X),\tau^{\mathcal{F}}(\tau^{\mathcal{F}}_{\varepsilon=0}))$ are functorial(\cite{HS}).

The space $(C(X),\tau^{\mathcal{F}}_{\varepsilon=0}(\tau^{\mathcal{F}}))$ is always countably tight, homogeneous, and $T_{1}$, but not necessarily Hausdorff or topological group. Necessary and sufficient conditions for $\tau^{\mathcal{F}}_{\varepsilon=0}$ or $\tau^{\mathcal{F}}$ to be Hausdorff, or group topology on $(C(X),+)$ are derived in \cite{G1}. These considerations are not included here, because of their significant length and will be examined in a later paper. Of course, one can ask many questions about the properties of these topologies. For an inspiration of such questions one could see \cite{A} and \cite{MN}.

\section{Characterizations of monomorphisms in $\lspfi$ \\via $\tau^{\mathcal{F}}_{\varepsilon=0}$ and $\tau^{\mathcal{F}}$}
The main results in this work are the following corollaries.
\begin{cor} \label{2.4.4} Let $\YF
\overset{f}{\leftarrow}\XF \in\lspfi$, and let
$A\equiv\widetilde{f}(C(Y))$. Then $f$ is a monomorphism in $\lspfi$
if and only if $A$ is dense in
$(C(X),\tau^{\mathcal{F}}_{\varepsilon=0})$.
\end{cor}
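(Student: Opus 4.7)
The plan is to deduce the corollary directly from Theorem \ref{7.3}(4) by matching the description of basic neighborhoods of $\tau^{\mathcal{F}}_{\varepsilon=0}$ with the ``$\mathcal{F}_{\delta}$/compact'' approximation condition appearing in that theorem. The two-sentence summary is: condition (4) of Theorem \ref{7.3} is essentially the assertion that $A$ meets every basic neighborhood of every $b\in C(X)$ in $\tau^{\mathcal{F}}_{\varepsilon=0}$, so once one unpacks the definitions the equivalence is almost immediate.

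First I would recall that, by the construction of $\tau^{\mathcal{F}}_{\varepsilon=0}$ given in the previous section, a typical basic neighborhood of $b\in C(X)$ has the form
\[
V=\bigcup_{S\in\mathcal{F}_{\delta}}U^{S}_{\varepsilon=0}(K_{S},b)=\{\,g\in C(X)\mid\exists\, S\in\mathcal{F}_{\delta},\ g|K_{S}=b|K_{S}\,\},
\]
where for each $S$ one has chosen some $K_{S}\in\mathcal{K}(S)$. Hence density of $A$ in $(C(X),\tau^{\mathcal{F}}_{\varepsilon=0})$ translates into the statement: for every $b\in C(X)$ and every assignment $S\mapsto K_{S}\in\mathcal{K}(S)$, there exist $S\in\mathcal{F}_{\delta}$ and $a\in A$ with $a|K_{S}=b|K_{S}$.

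For the implication ``monic $\Rightarrow$ density'' I would fix $b\in C(X)$ and an arbitrary basic neighborhood $V$ of $b$ as above. Since $f$ is a monomorphism, Theorem \ref{7.3}(4) supplies an $E\in\mathcal{F}_{\delta}$ such that every compact subset of $E$ is matched on by some $a\in A$. Applying this to the distinguished compact $K_{E}\subseteq E$ occurring in $V$ produces some $a\in A$ with $a|K_{E}=b|K_{E}$, which witnesses $a\in U^{E}_{\varepsilon=0}(K_{E},b)\subseteq V$, so $V\cap A\neq\emptyset$.

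For the converse I would argue by contraposition. If condition (4) of Theorem \ref{7.3} fails, then there is a fixed $b\in C(X)$ such that for each $E\in\mathcal{F}_{\delta}$ one may choose a compact $K_{E}\subseteq E$ for which no $a\in A$ satisfies $a|K_{E}=b|K_{E}$. The corresponding basic neighborhood $V=\bigcup_{E}U^{E}_{\varepsilon=0}(K_{E},b)$ of $b$ then misses $A$, contradicting density; hence density forces condition (4) and, by Theorem \ref{7.3}, monicity. The only genuinely substantive step is the correct identification of the local base of $\tau^{\mathcal{F}}_{\varepsilon=0}$ (already spelled out in the preceding section); once that is in hand there is no real obstacle, because Theorem \ref{7.3}(4) has been tailored precisely to this topology.
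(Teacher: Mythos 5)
Your proposal is correct and follows exactly the route the paper intends: it deduces the corollary from Theorem \ref{7.3}(4) by unpacking the basic neighborhoods $\bigcup_{S\in\mathcal{F}_{\delta}}U^{S}_{\varepsilon=0}(K_{S},b)$ of $\tau^{\mathcal{F}}_{\varepsilon=0}$; the paper merely calls this step ``straightforward'' and you have supplied the details, including the contrapositive argument (choosing a bad $K_{E}$ for each $E$) for the converse direction.
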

\begin{proof} The proof is straightforward and follows from Theorem \ref{7.3}(4) and the fact that basic neighborhoods of a function  $f\in C(X)$ are of the form $\underset{S\in\mathcal{F}_{\delta}}{\bigcup}U^{S}_{\varepsilon=0}(K_{s},f)$,
where $K_{s}\in\mathcal{K}(S)$ for every $S\in\mathcal{F}_{\delta}$.
\end{proof}
\begin{cor}\label{2.4.5} Let $\YF \overset{f}{\leftarrow}\XF \in\lspfi$ and let $A\equiv\widetilde{f}(C(Y))$. Then $f$ is a monomorphism in $\lspfi$ if and only if $A$ is dense in
$(C(X),\tau^{\mathcal{F}})$.
\end{cor}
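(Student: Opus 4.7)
My plan is to split the equivalence into its two implications and handle each separately, borrowing the easy direction from Corollary~\ref{2.4.4} and attacking the hard direction by contrapositive via condition~(2) of Theorem~\ref{7.3}.

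For the direction \emph{monomorphism $\Rightarrow$ density}, I would first observe the set-theoretic inclusion $U^{S}_{\varepsilon=0}(K,b)\subseteq U^{S}(K,\varepsilon,b)$ for every $\varepsilon>0$, $K\in\mathcal{K}(S)$, $S\in\mathcal{F}_{\delta}$. Consequently every basic $\tau^{\mathcal{F}}$-neighborhood $\bigcup_{S}U^{S}(K_{S},\varepsilon_{S},b)$ contains the basic $\tau^{\mathcal{F}}_{\varepsilon=0}$-neighborhood $\bigcup_{S}U^{S}_{\varepsilon=0}(K_{S},b)$, so $\tau^{\mathcal{F}}$ is coarser than $\tau^{\mathcal{F}}_{\varepsilon=0}$; density transfers down from finer to coarser topologies, and Corollary~\ref{2.4.4} closes this half.

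For the converse I would argue by contrapositive. Assume $f$ is not monic. By the negation of condition~(2) of Theorem~\ref{7.3} there is a particular $b\in C(X)$ such that for each $E\in\mathcal{F}_{\delta}$ one can pick $x_{1}^{E},x_{2}^{E}\in E$ for which no $a\in A$ simultaneously satisfies $a(x_{1}^{E})=b(x_{1}^{E})$ and $a(x_{2}^{E})=b(x_{2}^{E})$. Since every $a\in A$ factors as $g\circ f$ with $g\in C(Y)$, and since $Y$ is compact Hausdorff (hence Tychonoff), two-point interpolation in $Y$ shows that such a failure forces $x_{1}^{E}\neq x_{2}^{E}$, $f(x_{1}^{E})=f(x_{2}^{E})$, and $b(x_{1}^{E})\neq b(x_{2}^{E})$. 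Setting $\delta_{E}=|b(x_{1}^{E})-b(x_{2}^{E})|>0$, $K_{E}=\{x_{1}^{E},x_{2}^{E}\}\in\mathcal{K}(E)$, and $\varepsilon_{E}=\min\{1,\delta_{E}/2\}\in(0,1]$, I would assemble the basic $\tau^{\mathcal{F}}$-neighborhood $U=\bigcup_{E\in\mathcal{F}_{\delta}}U^{E}(K_{E},\varepsilon_{E},b)$. The fiber-constancy $a(x_{1}^{E})=a(x_{2}^{E})$ together with the triangle inequality gives $\max\{|a(x_{1}^{E})-b(x_{1}^{E})|,\,|a(x_{2}^{E})-b(x_{2}^{E})|\}\geq\delta_{E}/2\geq\varepsilon_{E}$ for every $a\in A$ and every $E$, so $a\notin U^{E}(K_{E},\varepsilon_{E},b)$ for any $E$; thus $A\cap U=\emptyset$, contradicting the assumed density.

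The main obstacle is the following asymmetry: $\tau^{\mathcal{F}}$ records only $\varepsilon$-approximation while $\tau^{\mathcal{F}}_{\varepsilon=0}$ demands exact matching, so \emph{a priori} density in $\tau^{\mathcal{F}}$ is strictly weaker. What rescues the equivalence is the structural rigidity of $A=\widetilde{f}(C(Y))$: its elements are constant on fibers of $f$, and that constancy upgrades the qualitative failure of fiber-invariance of $b$ into a uniform positive gap $\delta_{E}/2$ usable as the $\varepsilon_{E}$ which separates $A$ from $b$ in $\tau^{\mathcal{F}}$.
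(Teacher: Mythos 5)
Your forward direction is fine, and your contrapositive computation for the converse --- forcing $f(x_{1}^{E})=f(x_{2}^{E})$ and $b(x_{1}^{E})\neq b(x_{2}^{E})$ by two-point interpolation in $Y$, and then converting the qualitative failure of Theorem \ref{7.3}(2) into the quantitative gap $\delta_{E}/2$ --- is correct as far as it goes. It is also a genuinely different route from the paper, which proves the converse by embedding $(C(X),\tau^{\mathcal{F}})$ into $(C(X^{\ast}),\tau^{\mathcal{F}^{\ast}})$ along the irreducible preimage $\pi:X^{\ast}\rightarrow X$ and invoking the Ball--Hager correspondence between $\w$-epimorphisms, density in the epi-topology, and $\lspfi$-monomorphisms.

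There is, however, a gap at the final step ``thus $A\cap U=\emptyset$, contradicting the assumed density.'' This requires $U=\bigcup_{E}U^{E}(K_{E},\varepsilon_{E},b)$ to contain a nonempty $\tau^{\mathcal{F}}$-open set, i.e.\ to be a genuine neighborhood of $b$ in the meet topology. For an infimum of topologies, the sets $\bigcup_{S}U_{S}$ (one $\tau_{S}$-neighborhood of $b$ chosen for each $S$) generate a filter that \emph{contains} the $\wedge_{S}\tau_{S}$-neighborhood filter of $b$ --- every $\wedge$-neighborhood contains such a union, which is all that the easy direction needs --- but such a union need not itself be a $\wedge$-neighborhood; equivalently, the associated \v{C}ech closure operator (``$b$ adheres to $A$ iff every such union around $b$ meets $A$'') need not be idempotent, so $A$ can be topologically dense even though some union around some $b$ misses $A$. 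The paper's phrase ``a local base at $f$'' glosses over exactly this point, and, tellingly, the paper never uses that description in the hard direction: it switches to the $\w$-epic machinery precisely where you lean on the neighborhood claim. A decisive warning sign is that your argument uses only two-point sets $K_{E}$, so if it were valid it would show verbatim that $\sigma^{\mathcal{F}}$-density of $A$ implies $f$ is monic --- the converse of Corollary \ref{12}, which the paper explicitly leaves open. So either you must actually prove that the displayed unions are $\tau^{\mathcal{F}}$-neighborhoods (in which case you have also settled the paper's closing question), or the converse has to be obtained by another route, such as the one in the paper.
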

\begin{proof}
$\Rightarrow$ ) Obviously follows from Theorem \ref{7.3}(4).

$\Leftarrow$ ) For the converse we need some additional preliminary definitions and facts.

Let $\XF\in \lspfi$ and $X^{\ast}=\underset{\leftarrow}{\lim}\{\beta
F\,|\,F\in \mathcal{F}\}$, where $\beta F$ is the \v{C}ech-Stone
compactification of $F$. For $X^{\ast}$ the bonding maps are: when $F, F_{1}\in \mathcal{F}$ and $F\subseteq F_{1}$,  $\beta F_{1} \overset{\pi^{F}_{F_{1}}}{\leftarrow} \beta F$ is the
\v{C}ech - Stone extension of the inclusion. For  $F\in \mathcal{F}$, there is the projection $\beta F \overset{\pi_{F}}{\leftarrow} X^{\ast}$, thus there exists a continuous map
$\pi:X^{\ast}\rightarrow X$. This $\pi$ is irreducible, so inversely preserves dense sets ($\pi^{-1}[F]$ is dense in
$X^{\ast}$ for every $F\in\mathcal{F}$). Let $\mathcal{F}^{\ast}$ has
base $\{\pi^{-1}[F]\,|\,F\in\mathcal{F}\}$. Then
$\pi:(X^{\ast},\mathcal{F}^{\ast})\twoheadrightarrow \XF$ is a
morphism in $\lspfi$ and it is monic. (See \cite{BH2}.)
\begin{prop}\label{A} Let $\YF\overset{f}{\leftarrow}\XF \in\lspfi$ and $\pi$
be as above. Then $f$ is monic if and only if $f\circ\pi$ is monic.
\end{prop}
\begin{proof} If $f$ is monic then $f\circ\pi$ is monic as a composition
of two monomorphisms. For the converse we will use Theorem
\ref{7.3}(4). We have
\[\YF\overset{f}{\leftarrow}\XF\overset{\pi}{\leftarrow}
(X^{\ast},\mathcal{F}^{\ast}),\] and let
$A\equiv\widetilde{f}(C(Y))$ and $B\equiv\widetilde{\pi}(C(X))$. Let
$b\in C(X)$ then $\widetilde{\pi}(b)\in C(X^{\ast})$ and since
$f\circ\pi$ is monic, there exists $D\in\mathcal{F}^{\ast}_{\delta}$
such that for every compact $K\subseteq D$ there is $c\in
\widetilde{\pi}(A)=\widetilde{\pi}(\widetilde{f}(C(Y)))$ with
$c|K=\widetilde{\pi}(b)|K$. Since
$\{\pi^{-1}[F]\,|\,F\in\mathcal{F}\}$ is a base for
$\mathcal{F}^{\ast}$, we can take $D=\pi^{-1}[E]$ where $E\in
\mathcal{F}_{\delta}$. Let $K\in \mathcal{K}(E)$, then
$\pi^{-1}[K]\in \mathcal{K}(D)$, so there is a map $c\in
\widetilde{\pi}(A)$  such that $c|\pi^{-1}[K]
=\widetilde{\pi}(b)|\pi^{-1}[K]$. Since $c\in\widetilde{\pi}(A)$,
there is a map $a\in A$ such that $c=a\circ\pi$. Therefore
$a\circ\pi|\pi^{-1}[K] =b\circ\pi|\pi^{-1}[K]$, but that means
$a|K=b|K$.
\end{proof}
Let $SY$ be the ``$\spfi$ Yosida functor" $\w
\overset{SY}{\rightarrow}\lspfi$ (\cite{BHM} and \cite{Y}). Here $\w$ is the category of archimedean $l$-groups with distinguished weak order unit, with $l$-group homomorphisms which preserve unit. This category includes all
rings of continuous functions $C(X)$. Consider
$\XF\overset{\pi}{\twoheadleftarrow}
(X^{\ast},\mathcal{F}^{\ast})\in\lspfi$ and let
$C[\mathcal{F}]=\underset{\rightarrow}{\lim}\{C(F)\,|\,F\in
\mathcal{F}\}\in\w$. We have $\widetilde{\pi}:C(X)\rightarrow
C[\mathcal{F}]\in \w$, defined as $\widetilde{\pi}(f)=f\circ\pi$.
Let $B\equiv\widetilde{\pi}(C(X))\leq C[\mathcal{F}]$. Since $\pi$
is monic,
$(X^{\ast},\mathcal{F}^{\ast})\overset{\pi'}{\twoheadrightarrow}(X,\{X\})$
is monic too. $\pi'=SY(\widetilde{\pi})$, so $\widetilde{\pi}$ is
$\w$-epic (\cite{BH1}). Therefore $B$ is
$\tau^{\mathcal{F}^{\ast}}$-dense in $C[\mathcal{F}]$ (\cite{BH1}). Thus $B$ is $\tau^{\mathcal{F}^{\ast}}$-dense in
$C(X^{\ast})$.
\begin{prop}\label{AA} Consider
$(X^{\ast},\mathcal{F}^{\ast})\overset{\pi'}{\twoheadrightarrow}(X,\{X\})$.
Then $$\widetilde{\pi}:(C(X),\tau^{\mathcal{F}})\rightarrow
(C(X^{\ast}),\tau^{\mathcal{F}^{\ast}})$$ is a topological and
algebraic embedding.
\end{prop}
\begin{proof}The proof is straightforward and follows from the following facts: \\ (1) $\pi$ has the property: $ \forall\,E\in
\mathcal{F}^{\ast}_{\delta}\,\exists \,S_{E}\in
\mathcal{F}_{\delta}\,(\pi^{-1}[S_{E}]\subseteq E).$ \\ (2)
$\widetilde{\pi}:(C(X),\tau^{S_{E}})\rightarrow
(C(X^{\ast}),\tau^{E})$ is a topological and algebraic embedding.
(See also \cite{EC} and \cite{MN}.)
\end{proof}
\begin{cor} Let $\YF\overset{f}{\twoheadleftarrow}\XF \in\lspfi$ and
$A\equiv\widetilde{f}(C(Y))$ be dense in
$(C(X),\tau^{\mathcal{F}})$. Then $f$ is monic.
\end{cor}
\begin{proof} By Proposition \ref{A} it suffices to show that
$\YF\overset{f}{\leftarrow}\XF\overset{\pi}{\leftarrow}
(X^{\ast},\mathcal{F}^{\ast})$ is monic.
$B\equiv\widetilde{\pi}(C(X))$ is $\tau^{\mathcal{F}^{\ast}}$-dense
in $C[\mathcal{F}]$ and by Proposition \ref{AA}
$(C(X),\tau^{\mathcal{F}})$ is homeomorphic to $B$. Therefore $A$ is
$\tau^{\mathcal{F}^{\ast}}$-dense in $C[\mathcal{F}]$. Thus
$A\overset{\lambda}{\leq}C[\mathcal{F}]$ is $\w$-epic. Then
$SY(\lambda)$ is $\lspfi$-monic. But $SY(\lambda)=f\circ\pi$.
\end{proof}
This completes the proof of Corollary \ref{2.4.5}.
\end{proof}

Let us note, that the condition (3) from Theorem \ref{7.3} suggests another topology
on $C(X)$. More precisely, for any $S\in \mathcal{F}_{\delta}$ one could consider the
topology of pointwise convergence (see \cite{A}) on $C(S)$ and the relativization of this topology to $C(X)$.
If it is denoted by $\sigma_{s}$, then define the topology
$\sigma^{\mathcal{F}}\equiv\wedge\{\sigma_{S}|S\in \mathcal{F}_{\delta}\}$.
This topology is coarser than the topology $\tau^{\mathcal{F}}$. For a topological space $Y$, let $Fin(Y)$ be the family of all finite subsets of $Y$. For a function $f\in C(X)$ the basic neighborhoods are of the form $\underset{S\in\mathcal{F}_{\delta}}{\bigcup}U^{S}(K_{S},\varepsilon_{S},f)$,
where $K_{S}\in Fin(S)$ and $\varepsilon_{S}\in(0,1]$ for
every $S\in\mathcal{F}_{\delta}$. Sets of the form $U^{S}(K_{S},\varepsilon_{s},f)\equiv\{g\in C(X)|x\in K_{S}\Rightarrow |f(x)-x(x)|<\varepsilon\}$, where $K_{S}\in Fin(S)$ and $\varepsilon_{S}\in(0,1]$, are the basic neighborhoods of $f$ in $(C(X), \sigma_{S})$. From this fact and Theorem \ref{7.3}(3) the following corollary is immediate.
\begin{cor} \label {12} Let $\YF\overset{f}{\twoheadleftarrow}\XF \in\lspfi$ be a monomorphism in
$\lspfi$ and $A\equiv\widetilde{f}(C(Y))$. Then $A$ is dense in
$(C(X),\sigma^{\mathcal{F}})$.
\end{cor}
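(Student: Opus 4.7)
The plan is to unwind the definitions and apply Theorem \ref{7.3}(3) directly. Since $f$ is a monomorphism, condition (3) of Theorem \ref{7.3} assigns to each $b\in C(X)$ some $E_{b}\in \mathcal{F}_{\delta}$ with the property that every finite subset $F\subseteq E_{b}$ admits $a\in A$ satisfying $a|F=b|F$. To prove density of $A$ in $(C(X),\sigma^{\mathcal{F}})$, I would fix an arbitrary $b\in C(X)$ and an arbitrary basic $\sigma^{\mathcal{F}}$-neighborhood
\[
V \;=\; \bigcup_{S\in\mathcal{F}_{\delta}} U^{S}(K_{S},\varepsilon_{S},b),
\]
where $K_{S}\in Fin(S)$ and $\varepsilon_{S}\in(0,1]$ for every $S\in\mathcal{F}_{\delta}$, and I would show $V\cap A\neq\emptyset$.

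The key observation is that because $V$ is a union indexed over all $S\in\mathcal{F}_{\delta}$, it suffices to produce an $a\in A$ lying in a single one of these pieces. I would specialize to $S=E_{b}$, so that $K_{E_{b}}$ is a finite subset of $E_{b}$. Applying Theorem \ref{7.3}(3) to $b$ and the finite set $F=K_{E_{b}}\subseteq E_{b}$ supplies an $a\in A$ with $a|K_{E_{b}}=b|K_{E_{b}}$. Then for every $x\in K_{E_{b}}$ we have $|a(x)-b(x)|=0<\varepsilon_{E_{b}}$, so $a\in U^{E_{b}}(K_{E_{b}},\varepsilon_{E_{b}},b)\subseteq V$, as required.

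There is essentially no obstacle here; the corollary is genuinely an immediate consequence, as the author points out, provided one notices the right pairing: the indexing set $\mathcal{F}_{\delta}$ over which Theorem \ref{7.3}(3) gives an existential guarantee is the same indexing set over which the $\sigma^{\mathcal{F}}$-basic neighborhoods are unioned, and the ``finite subset'' quantifier in (3) matches the $K_{S}\in Fin(S)$ requirement in the definition of $\sigma^{\mathcal{F}}$. The one point worth emphasizing in writing is that the ``$\varepsilon=0$'' equality furnished by Theorem \ref{7.3}(3) is strictly stronger than the ``$<\varepsilon_{S}$'' required by $\sigma^{\mathcal{F}}$, so any positive $\varepsilon_{E_{b}}$ works and no finer analysis of the tolerances $\varepsilon_{S}$ is needed.
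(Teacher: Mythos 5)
Your proof is correct and is exactly the argument the paper intends: the paper simply declares the corollary ``immediate'' from Theorem \ref{7.3}(3) and the description of the basic $\sigma^{\mathcal{F}}$-neighborhoods as unions $\bigcup_{S\in\mathcal{F}_{\delta}}U^{S}(K_{S},\varepsilon_{S},b)$, and you have just unwound that in detail by selecting the piece indexed by $S=E_{b}$.
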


In a similar way one can define the topology $\sigma^{\mathcal{F}}_{\varepsilon=0}$ as
the meet of the topologies $\{\sigma^{S}_{\varepsilon=0}|S\in \mathcal{F}_{\delta}\}$. Here in the definition of the topology of pointwise convergence we again alow $\varepsilon=0$, i.e. sets of the form $U^{S}(K_{S},f)\equiv\{g\in C(X)|x\in K_{S}\Rightarrow |f(x)-x(x)|=0\}$, where $K_{S}\in Fin(S)$, are the basic neighborhoods of $f$ in $(C(X), \sigma^{S}_{\varepsilon=0})$. Thus, for a function $f\in C(X)$ the basic neighborhoods are of the form $\underset{S\in\mathcal{F}_{\delta}}{\bigcup}U^{S}(K_{S},f)$,
where $K_{S}\in Fin(S)$ for every $S\in\mathcal{F}_{\delta}$.
Now, it is clear that the following corollary is true.

\begin{cor} Let $\YF\overset{f}{\twoheadleftarrow}\XF \in\lspfi$ and $A\equiv\widetilde{f}(C(Y))$. Then $f$ is a monomorphism in $\lspfi$ if and only if $A$ is dense in
$(C(X),\sigma^{\mathcal{F}}_{\varepsilon=0})$.
\end{cor}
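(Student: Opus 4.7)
The plan is to mirror the proof of Corollary \ref{2.4.4}, but with Theorem \ref{7.3}(3) playing the role that Theorem \ref{7.3}(4) played there, matching the fact that $\sigma^{\mathcal{F}}_{\varepsilon=0}$ is built from finite sets while $\tau^{\mathcal{F}}_{\varepsilon=0}$ is built from compact sets. A basic neighborhood of $b\in C(X)$ in $\sigma^{\mathcal{F}}_{\varepsilon=0}$ has the form
\[
V(b,\{K_{S}\})\;\equiv\;\bigcup_{S\in \mathcal{F}_{\delta}} U^{S}(K_{S},b),
\]
where $K_{S}\in Fin(S)$ for each $S\in \mathcal{F}_{\delta}$, and $a\in V(b,\{K_{S}\})$ if and only if there exists some $S\in\mathcal{F}_{\delta}$ with $a|K_{S}=b|K_{S}$. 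Both directions of the corollary reduce to comparing this description with condition (3) of Theorem \ref{7.3}.

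For the forward direction ($\Rightarrow$), I would fix $b\in C(X)$ and an arbitrary basic neighborhood $V(b,\{K_{S}\})$ and apply Theorem \ref{7.3}(3) to $b$ to obtain $E\in \mathcal{F}_{\delta}$ such that for every finite $F\subseteq E$ there is $a\in A$ with $a|F=b|F$. Specializing $F$ to the finite set $K_{E}\in Fin(E)$ picked out by the given neighborhood yields $a\in A$ with $a|K_{E}=b|K_{E}$, hence $a\in U^{E}(K_{E},b)\subseteq V(b,\{K_{S}\})$, proving density of $A$.

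For the converse ($\Leftarrow$), I would argue by contradiction: suppose $f$ is not monic; then by Theorem \ref{7.3}(3) there exists $b\in C(X)$ such that for every $E\in \mathcal{F}_{\delta}$ there is a finite set $F_{E}\subseteq E$ with no $a\in A$ satisfying $a|F_{E}=b|F_{E}$. Pick such an $F_{S}\in Fin(S)$ for every $S\in\mathcal{F}_{\delta}$ simultaneously (using the axiom of choice) and consider the basic neighborhood $V(b,\{F_{S}\})$. If some $a\in A$ lay in this set, there would be $S\in\mathcal{F}_{\delta}$ with $a|F_{S}=b|F_{S}$, contradicting the defining property of $F_{S}$. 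Hence $V(b,\{F_{S}\})\cap A=\emptyset$, contradicting density.

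Neither direction has a substantive obstacle; the only point requiring care is the bookkeeping of the meet-topology basic opens, namely the fact that membership in $V(b,\{K_{S}\})$ is an \emph{existential} statement over $S\in\mathcal{F}_{\delta}$, which is exactly what lets a single $E$ produced by Theorem \ref{7.3}(3) witness membership, and conversely lets a single counterexample family $\{F_{S}\}$ defeat density. This is why the author remarks that the corollary is ``clear'' after Corollary \ref{12}.
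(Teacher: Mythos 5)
Your argument is correct and is exactly the reasoning the paper leaves implicit: the paper simply declares this corollary ``clear'' from Theorem~\ref{7.3}(3) together with the stated form of the basic neighborhoods of $\sigma^{\mathcal{F}}_{\varepsilon=0}$, and your two directions (a single $E$ from Theorem~\ref{7.3}(3) witnessing membership in the union, and a choice of finite sets $F_{S}$ defeating density when (3) fails) spell out that translation precisely. The only external input you use --- that the sets $\bigcup_{S}U^{S}(K_{S},b)$ form a neighborhood base for the meet topology --- is asserted by the paper itself, so nothing further is needed.
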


Let us mention, that the authors of \cite{BH2} said some words about a possible topology on $C(X)$, but left this problem open. We do not know is the converse of Corollary \ref{12} also true?



\begin{thebibliography}{999}

\bibitem{A}
{\it A.~V.~Arkhangel'skii}: Topological Function Spaces, Mathematics
and its Aplications, Vol.78, Kiuwer Academic Publishers, Dordrecht,
Boston, London, 1992.


\bibitem{BH1}
 {\it R.~N.~Ball, A.~W.~Hager}:
 Epi-topology and epi-convergence for Archimedean lattice-ordered groups
with week unit.
 Appl. Categor. Struct.,{\bf 15} (2007), 81--107.


\bibitem{BH2}
 {\it R.~N.~Ball and A.~W.~Hager}:
 Monomorphisms in Spaces with Lindel\"{o}f Filters.
 Czech. Math.J.
 {57, No.1}
 (2007) 281--317.

\bibitem{BHW}
{\it R.~N.~Ball, A.~W.~Hager, J.Walters-Wayland}: An intransic
Characterization of Monomorphisms in Regular Lindel\"{o}f Locales.
Appl. Categor. Struct.,{\bf 15}(2007) 109--118.



\bibitem{BHM}
 {\it R.~Ball, A.~Hager and A.~Molitor}:
 Spaces with filters.
 Proc. Symp. Cat. Top., Univ. Cape Town 1994,
 C. Gilmour, B. Banaschevski and H. Herrlich eds.,
 Dept. Math. And Appl. Math., Univ. Cape Town,
 (1999), 21--36.

\bibitem{BHMa}
 {\it R.~Ball, A.~Hager, and A.~Macula}:
 An $\alpha$-disconnected space has no proper monic preimage.
 Top. Appl. {\bf 37} (1990),
 141--151.



\bibitem{EC}
 {\it Eduard~\v{C}ech}:
 Topological Spaces,
 revised edition by Zdenek Frolik and Miroslav Katetov,
 CSAV, Prague and Interscience Publishers, London,
 (1966).

\bibitem{E}
 {\it R.~Engelking}:
 General Topology,
 Heldermann Verlag,
 Berlin,
 (1989).

\bibitem{GJ}
 {\it L.~Gillman and M.~Jerison}:
 Rings of Continuous Functions,
 Van Nostrand,
 Princeton, NJ,
 (1960),
 reprinted as: Graduate Texts, Vol. 43, Springer, Berlin, 1976.

\bibitem{G1}
{\it V.~Gochev}: Compact-open-like topologies on $C(X)$ and
applications, Ph.D. Thesis, Wesleyan University, 2007.


\bibitem{HR}
{\it K.~Hewit and K.~Ross}: Abstract Harmonic Analysis, I,
Springer-Verlag and Academic Press, 1963.


\bibitem{H}
 {\it A.~Hager}:
 Minimal Covers on Topological Spaces.
 Ann. New York Acad. Sci., Papers on Gen. Topol. and Rel. Cat. Th. and
Top. Alg.,
 {\bf 552} (1989),
 44--59.

 \bibitem{H1}
 {\it A.~Hager}:
 Monomorphisms in spaces with filters.
 Lecture at Curacao Math. Foundation Conference on Locales and Topological
Groups, 1989.

\bibitem{HS}
 {\it H.~Herrlich and G.~Strecker}:
 Category Theory,
 Allyn and Bacon, Inc.,
 (1973).

\bibitem{JM}
{\it J.~Madden}:
 $\kappa$ - frames.
 J. Pure and Appl. Algebra,
 {\bf 70} (1991), 107--127.

\bibitem{MM}
 {\it J.~Madden and A.~Molitor}:
 Epimorphisms of frames.
 J. Pure and Appl. Algebra,
 {\bf 70} (1991), 129--132.

\bibitem{MN}
 {\it R.~A.~McCoy and I.~Ntantu}:
 Topological Properties of
 Spaces of Continuous Functions,
 Lecture Notes in Mathematics,
 {\bf 1315},
 Springer-Verlag,
 Berlin,
 (1988).

\bibitem{Y}
 {\it K.~Yosida}:
 On the Representation of the Vector Lattice.
 Proc.Imp.Acad., Tokyo, {\bf 18},
 (1942), 339--342.

\end{thebibliography}
\end{document}